\theoremstyle{plain}
\newtheorem*{theorem*}{Theorem}
\renewcommand{\epsilon}{\varepsilon}
\newcommand{\N}{{\mathbb N}}
\newcommand{\R}{{\mathbb R}}
\newcommand{\C}{{\mathbb C}}
\newcommand{\Z}{{\mathbb Z}}
\newcommand{\E}{{\mathbf E}}
\newcommand{\Prob}{{\mathbf P}}
\renewcommand{\phi}{\varphi}
\numberwithin{equation}{subsection}
\newtheorem{theo}[equation]{{\sc Theorem}}
\newtheorem{cor}[equation]{{\sc Corollary}}
\newtheorem{lem}[equation]{{\sc Lemma}}
\newtheorem{prop}[equation]{{\sc Proposition}}
\theoremstyle{definition}
\theoremstyle{remark}
\newtheorem{rem}[equation]{Remark}
\title[Periods of random eigenfunction clusters]{Expected values of eigenfunction periods}
\author{Suresh Eswarathasan}
\address{Institut des Hautes \'Etudes Scientifiques, 35 route des Chartres, Bures-sur-Yvette, France F-91440} 
\email{suresh@ihes.fr}
\address{Department of Mathematics and Statistics, McGill University, 805 Rue Sherbrooke Ouest, Montr\'eal, Canada}
\email{suresh@math.mcgill.ca}
\date{}
\begin{document}

\begin{abstract}
Let $(M,g)$ be a compact Riemannian surface.  Consider a family of $L^2$ normalized Laplace-Beltrami eigenfunctions, written in the semiclassical form $-h_j^2\Delta_g \phi_{h_j} = \phi_{h_j}$, whose eigenvalues satisfy $h h_j^{-1} \in (1, 1 + hD]$ for $D>0$ a large enough constant.  Let $\Prob_h$ be a uniform probability measure on the $L^2$ unit-sphere $S_h$ of this cluster of eigenfunctions and take $u \in S_h$.  Given a closed curve $\gamma \subset M$, there exists $C_{1}(\gamma, M), C_{2}(\gamma, M) > 0$ and $h_0>0$ such that for all $h \in (0, h_0],$
\begin{equation*}
 C_1 h^{1/2} \leq \E_{h} \bigg[ \big| \int_{\gamma} u \, d \sigma \big| \bigg] \leq C_2 h^{1/2} .
\end{equation*}
This result contrasts the deterministic $\mathcal{O}(1)$ upperbounds obtained by Chen-Sogge \cite{CS}, Reznikov \cite{Rez}, and Zelditch \cite{Zel}. Furthermore, we treat the higher dimensional cases and compute large deviation estimates.  Under a measure zero assumption on the periodic geodesics in $S^*M$, we can consider windows of small width $D=1$ and establish a $\mathcal{O}(h^{1/2})$ estimate.  Lastly, we treat probabilistic $L^q$ restriction bounds along curves.
\end{abstract}

\maketitle

\section{Introduction and Main Results}

\subsection{Introduction}
Let $(M,g)$ be a smooth compact Riemmanian manifold without boundary and $\phi_{h}$ be an $L^2$ normalized eigenfunction of the Laplace-Beltrami operator, written in the semiclassical form $-h^2\Delta_g \phi_{h} = \phi_{h}$. The quantities known as ``periods" $I_{\gamma, h}$, where 
\begin{equation} \label{period}
I_{\gamma, h} = \big| \int_{\gamma} \phi_{h} \, d \sigma \big|
\end{equation}
for $\gamma$ a smooth closed curve on $M$ with arclength measure $d \sigma$, have garnered much interest as of late due to its connection with understanding nodal domains of eigenfunctions \cite{JZ}. Such quantities are a complement to the study of restricted $L^p$ norms \cite{BGT, Hu} and quantum ergodic restriction \cite{TZ, DZ}, and give us information on the fluctuations of eigenfunctions along a curve.

For perspective, let us recall the global $L^p$ estimates of Sogge \cite{Sogg} in dimension $2$: There exists $C>0$ such that $\| \phi_h \|_{L^p(M)} \leq C h^{-\delta(p)}$ where $\delta(p)=\frac{1}{2} - \frac{2}{p}$ for $p \geq 6$ and $\delta(p)= \frac{1}{2}(\frac{1}{2} - \frac{1}{p})$ for $p \leq 6$.  The results for $L^p$ restriction along curves, due in full generality to Burq, G\'erard, and Tzvetkov \cite{BGT}, state: For any finite-length smooth curve $\gamma$, there exists $C>0$ such that $\| \phi_h \|_{L^p(\gamma)} \leq C h^{-\eta(p)}$ where $\eta(p) = \frac{1}{4}$ for $2 \leq p \leq 4$ and $\eta(p) = \frac{1}{2} - \frac{1}{p}$ for $p \geq 4$.

The question of asymptotics of (\ref{period}), which do not seem to follow trivially from $L^2$ restriction bounds even though an eigenfunction period is the zeroth coefficient of the eigenfunction's Fourier series expansion along the curve,  was initially posed on compact hyperbolic surfaces by Good and Hejhal \cite{Go, Hej}.  They showed, by using the Kuznecov trace formula, that
\begin{equation} \label{prevbound}
I_{\gamma, h} \leq C_{\gamma}
\end{equation}
for some $C_{\gamma}>0$ and all $h \in (0,1]$.  This demonstrates a stark constrast to the $L^p$ results of \cite{Sogg} and \cite{BGT}.
Subsequent work by Chen and Sogge \cite{CS} generalized this bound to unit-length geodesics $\gamma$ on compact surfaces using the H\"ormander parametrix, while Reznikov \cite{Rez} proved the bound in the case of arithmetic surfaces using representation theory.  However, before these results were proved, Zelditch \cite{Zel} generalized the Kuznecov trace formula to compact manifolds and obtained (\ref{prevbound}) for any closed hypersurface using the full power of the global symbol calculus for homogeneous Lagrangian distributions.  It is this trace formula that Zelditch established and the consequential asymptotic
\begin{equation} \label{kuz}
\sum_{h_j^{-1} < h^{-1}} \bigg| \int_{\gamma} \phi_{h_j} \, d \sigma \bigg|^2 = C_{\gamma} h^{-1} + \mathcal{O}(1),
\end{equation}
along with an idea of Burq and Lebeau \cite{BL} that play key roles in our main probabilistic result.  As the Weyl law tells us the number of terms in the sum of (\ref{kuz}) is on the order of $h^{-2}$, it follows that ``most" of the terms should be on the order of $h$. 

Using a simple application of Chebyshev's inequality and the asymptotic (\ref{kuz}), Jung and Zelditch were able to obtain the following upper bound:
\begin{theorem*}  \cite{JZ} 
For any compact surface $M$ and closed curve $\gamma \subset M$, and any $g(t)$  such that $\lim_{t \rightarrow \infty} g(t) = \infty$, there exists a subsequence of eigenfunctions $\phi_{h_j}$ of density one such that
\begin{equation} \label{densityone}
\bigg| \int_{\gamma} \phi_{h_j} \, d \sigma \bigg| = \mathcal{O}\big((g(h_j^{-1})h_j)^{1/2} \big).
\end{equation}
\end{theorem*}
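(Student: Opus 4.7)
The plan is to execute the Chebyshev-plus-Kuznecov argument alluded to immediately before the statement. One should interpret (\ref{kuz}) as an $\ell^2$ bound on the periods over a spectral window; combined with Weyl's law, which in dimension two gives $\#\{j : h_j^{-1} \leq T\} = \Theta(T^2)$, this says the ``average'' squared period at scale $h$ is $\Theta(h)$. The theorem converts this average statement into a density-one pointwise bound. Without loss of generality I may assume $g$ is monotone nondecreasing, else I replace it by $\tilde g(t) := \inf_{s \geq t} g(s)$, which still tends to $\infty$.

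First I partition the spectrum into dyadic shells $W_k := \{j : 2^{k-1} < h_j^{-1} \leq 2^k\}$. Subtracting (\ref{kuz}) at $h = 2^{-k}$ and $h = 2^{-k+1}$ yields
$$\sum_{j \in W_k} \big|\textstyle\int_\gamma \phi_{h_j}\,d\sigma\big|^2 = C_\gamma \cdot 2^{k-1} + \mathcal{O}(1) = \Theta(2^k).$$
Next I define the bad set at scale $k$ by
$$B_k := \Big\{j \in W_k : \big|\textstyle\int_\gamma \phi_{h_j}\,d\sigma\big|^2 > g(2^{k-1}) \cdot 2^{-k}\Big\}.$$
Since $h_j \geq 2^{-k}$ on $W_k$ and $g$ is nondecreasing, any $j \in W_k$ that violates the target bound (\ref{densityone}) automatically lies in $B_k$. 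Chebyshev's inequality applied to the previous display then gives
$$\#B_k \leq \frac{\Theta(2^k)}{g(2^{k-1}) \cdot 2^{-k}} = \mathcal{O}\bigl(2^{2k}/g(2^{k-1})\bigr).$$

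Finally, Weyl's law implies $\#W_k = \Theta(2^{2k})$, so $\#B_k / \#W_k = \mathcal{O}(1/g(2^{k-1})) \to 0$ as $k \to \infty$. Setting $\mathcal{G} := \bigsqcup_k (W_k \setminus B_k)$ and evaluating the counting ratio up to $N = 2^K$, one splits $\sum_{k \leq K} \#B_k$ into small-$k$ indices (contributing a bounded amount) and large-$k$ indices (where $1/g(2^{k-1}) < \epsilon$); the resulting ratio is $o(1)$ as $K \to \infty$, so $\mathcal{G}$ is of Weyl density one, and (\ref{densityone}) holds throughout $\mathcal{G}$ by construction. There is no substantive obstacle beyond the bookkeeping needed to handle the $h_j$-dependence of the threshold, which the dyadic cover and the monotonicity reduction of $g$ dispatch cleanly; all the content of the theorem lives in the Kuznecov trace asymptotic (\ref{kuz}) itself.
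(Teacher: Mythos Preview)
Your argument is correct and is precisely the ``Chebyshev's inequality plus the asymptotic (\ref{kuz})'' approach the paper attributes to \cite{JZ}; the paper itself does not supply a proof of this cited result, only that one-line description, and your dyadic-shell bookkeeping is a clean way to carry it out. One minor comment: you only need the upper bound $\sum_{j\in W_k}\bigl|\int_\gamma\phi_{h_j}\bigr|^2=\mathcal{O}(2^k)$ for the Chebyshev step, so writing $\Theta(2^k)$ is stronger than necessary (though still true).
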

We would like to mention that without using the Kuznecov trace formula, we can replace the estimate (\ref{densityone}) with the little-o estimate $o(1)$ just by assuming a generic condition on $\gamma$ and using the ``quantum ergodic restriction" result of Toth and Zelditch \cite{TZ, DZ}.  Chen and Sogge \cite{CS} were also able to prove that the constant in (\ref{prevbound}) can be replaced with $o(1)$ in the strictly negative curvature setting via a refined analysis with the Hadamard parametrix. Both of these estimates can be related to the ``Random wave conjecture" of Berry \cite{Berr}.  Please see our remarks section and the discussion below. 


\subsection{Main Results}

Let $S_{h}$ denote the $L^2$ unit sphere of the space $E_{h} = \{u \in L^2(M) = \break \sum_{h h_j^{-1} \in [1, 1 + hD_M)} z_j \phi_j(x), z_j \in \C \}$ endowed with a uniform probability measure $\Prob_{h}$, defined in Section 2, for some large enough constant $D>0$ that is independent of $h$.  

Also, let ``$f(h) \simeq g(h)$" be the notation that there exists constants $C_1, C_2 >0$ independent of $h$ and an $h_0 >0$ such that for $h \in (0, h_0]$, we have $C_1 g(h) \leq f(h) \leq C_2 g(h)$.

In this note, we prove the following theorem:

\begin{theo} \label{maintheorem}
 Let $(M,g)$ be a compact Riemannian manifold of dimension $n$ and $S$ be a closed submanifold of dimension $d$ with the induced measure $d \sigma$.  Then for a positive integer $p$ and $u \in S_h$, there exists $h_0>0$ such that  
\begin{equation*} 
\E_{h} \big[ \big| \int_S u \, d \sigma \big|^p  \big] \simeq  h^{\frac{dp}{2}} 
\end{equation*}
for $h \in (0,h_0]$, where $\E_h$ is taken with respect to $\Prob_h$.   

Furthermore, for $M_{1,h}$ being the median value of the random variable $| \int_S u |$,
\begin{equation*}
\Prob_{h} \bigg[ \big| | \int_S u \, d \sigma | - M_{1,h} \big| > r \bigg] \leq C \exp \big( - h^{-d}r^2 \big),
\end{equation*}
where $C>0$ is independent of $h$.
\end{theo}


That is, we are able to prove for random linear combinations of eigenfunctions $u$ in the spectral window $[1, 1 + hD)$, (\ref{densityone}) is improved to a $h^{1/2}$ upperbound and lowerbound when $p=d=1$ and $n=2$, and that the random variable concentrates around the median with $h$.  Moreover, we can set $D=\epsilon$ for any fixed $\epsilon>0$ under a measure zero assumption for the set of periodic geodesics in $S^*M$, but must settle for a $h^{1/2}$ upperbound only; please see Section 2.2 for the precise measure 0 assumption and Corollary \ref{averageperiod}.  We also establish a deviation estimate for renormalized random variables when $p>1$.  However, due to the fact that our $p$th moments are just the $p$th powers of our random variables, the most meaningful deviation estimate occurs when $p=1$.  

As a simple justification for this estimate, consider the Gaussian random plane waves on the 2-torus of the form $\frac{1}{k_h} \sum _{j=1}^{N_h} a_j \exp(i h_j^{-1} \cdot x)$ for $h|h_j^{-1}| \in [1, 1 +Dh]$.  Here, $N_{h}$ is equal to the dimension of the eigenspace associated to the window $[1, 1 + Dh]$, $D>0$ is large enough such that $N_h \sim h^{-1}$ by Weyl's Law \cite{Ho}, $k_{h}$ a normalizing factor that is asymptotic to $N_{h}^{1/2}$, and the coefficients $a_j$ are i.i.d. Gaussian random variables.  Simply calculating upperbounds on the period of these quantities demonstrates our asymptotic.   

For the purpose of comparison, let us observe some deterministic cases.  Take $S^2$ and let $Z_{h}$ be a zonal harmonic associated to the eigenvalue $h^{-2}$ and $\gamma$ be a small segment going through the north pole that is also part of a great circle; then $|\int_{\gamma} Z_{h} d \sigma| = \mathcal{O}(h^{1/2})$.  In a large contrast however, letting $\gamma$ be the equator saturates the Chen-Sogge bound (\ref{prevbound}).  Now, on the 2-torus $\mathbb{T}^2$ with eigenfunctions of the form $e^{in x_2}$, taking $\gamma$ to be a small segment of a vertical geodesic gives $|\int_{\gamma}  e^{in x_2} d \sigma| = \mathcal{O}(n^{-1})$.  And similarly to the spherical case, for a given closed geodesic $\gamma$ on $\mathbb{T}^2$, there exist a sequence of eigenfunctions $\phi_j$ such that $\phi_{j|\gamma} = 1$ and therefore saturates the corresponding period bound (\ref{prevbound}) as well.  It is clear that among these two manifolds, which have diametrically opposite spectral theoretic settings, there exists a large range of decay rates.

Our probabilistic results can be interpreted as saying a ``typical" eigenfunction cluster should oscillate enough in order to induce a decaying period on a closed curve, along which the deterministic bounds seemed to be saturated, and that decay better/worse than $h^{1/2}$ is ``atypical".  According to the ``Random wave conjecture" of M. Berry \cite{Berr}, which states the Gaussian random waves model the quantum ergodic case, our estimates appear to be consistent after applying a simple observation using results in \cite{TZ}; please see our remarks section.

We spend a majority of our note proving Theorem \ref{maintheorem}, but also give the following related result on restricted $L^q$ norms along finite-length smooth curves.  The result essentially follows from Theorem 4 in \cite{BL} after making a simple observation.

\begin{theo} \label{subcor}
Given the same setup as Theorem \ref{maintheorem} in the 2-dimensional case with $L_{q, h}$ being the median value of the random variable $\|u\|_{L^q(\gamma)}$ for $q \in [2, \infty)$, we have that for all $h \in (0, 1]$
\begin{equation*}
\Prob_{h} \big( \big| \|u\|_{L^q(\gamma)} - L_{q,h} \big| > r \big) \leq C e^{-c_q h^{-\delta(q)} r^2}
\end{equation*}
where $C, c_q > 0$ and $\delta(q) >0$.  Moreover, given $q \in [2, \infty)$ there exists $h_0$ such that for all $h \in (0,h_0]$, we have that 
\begin{equation*} 
 L_{q,h} \simeq 1.
\end{equation*}
\end{theo}


We would like to mention that \cite{BL} also treats the case of probabilistic $L^{\infty}$ bounds on $M$, from which the restricted $L^{\infty}$ bounds on $\gamma$ follow immediately.  Independently and using different methods, Canzani and Hanin \cite{CH} were able to retrieve these same $L^{\infty}$ upperbounds over aperiodic manifolds $M$ with more precise constants.  

\subsection*{Acknowledgements}
The author would like to thank John Toth for suggesting the problem of eigenfunction periods and numerous helpful discussions.  Furthermore, the author would like to thank Nicolas Burq for the suggestion to look at the paper \cite{BL}, from which this article was inspired, and St\'ephane Nonnenmacher for his valuable comments on previous versions of this paper, as well his useful perspective on quantum ergodic restriction. Finally, the author would like to thank the IH\'ES, where the writing of this article took place.

\section{Probability and measure concentration on spheres} 

The probabilistic setup, and following exposition in the first two sections, we use is patterned after that in \cite{BL}.

\subsection{Measure concentration on spheres} \label{spheremeasures}
In this section, we define the probability measures used in our note. 

Even though we are dealing with $\C^N$, we are only using its properties as a vector space.  Hence, let us take $\R^N$ with the Lebesgue measure $dx = \Pi_{1 \leq j \leq N} d \, x_j$, and consider the uniform probability measure on the unit sphere $S^{N-1} \subset \R^N$ of dimension $N-1$.  This probability measure $\Prob_N$ on $S(N) := S^{N-1}$ is exactly the pushforward of the Gaussian probability measure 
\begin{equation*}
\Pi_{1 \leq j \leq N} \frac{1}{\sqrt{2 \pi}} e^{-|x_j|^2 / 2} d \, x_j
\end{equation*}
on $\R^N$ via the projection map 
\begin{equation*}
\pi : x \rightarrow \frac{x}{|x|},
\end{equation*}
where $|x|$ is the standard Euclidean norm.  The main idea is to consider large $N$ and use these Gaussian probability measures on the corresponding spheres.

For our purposes, we want to compute the corresponding distribution functions explicitly and utilize the concentration of measure phenomenon (to be described more precisely below).  In order to do this, let us fix $M \geq 1$ and group the variables on a large sphere $S(N)$ into $M$ parts.  For $N_j \geq 1$, we consider $N = \sum_{j=1}^M N_j$.  Using this decomposition of $N$, we can rewrite the coordinates on $\R^N$ as $(x_1,...,x_M)$, with each component $x_j$ having the polar decomposition $x_j = \rho_j \omega_j \in \R^{N_j}$ for $\rho_j > 0$ and $\omega_j \in S^{N_j-1}$.  


As we need a particular representation of the distribution function $\Prob_N$, we consider the decomposition of $N = 2 + (N-2)$ where $M=2$.  An explicit calculation shows that 
\begin{equation}\label{distfunc}
\Prob_N(|x_1| > t) = 1_{t \in [0,1)}(1-t^2)^{N/2 - 1},
\end{equation}
where $(x_1, x_2) \in S(N)$ with $N_1=2$ and $N_2= N - 2$.

Furthermore, we have the following deviation formula. 

\begin{prop} 
Suppose $F$ is a Lipschitz function on the sphere $S^{N - 1} = S(N)$, endowed with the uniform probability measure $\Prob_N$ given above, with respect to the natural geodesic distance.  For the median value $M(F)$, we have that for all $r>0$
\begin{equation}
\Prob_N( |F - M(F) | > r) \leq 2 e^{-(N-2)\frac{r^2}{2\|F\|^2_{Lip}}}. \label{measconc}
\end{equation}
This large deviation estimate is more commonly referred to as the ``concentration of measure phenomenon".
\end{prop}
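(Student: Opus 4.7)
The plan is to deduce this concentration inequality from L\'evy's spherical isoperimetric theorem together with the explicit distribution (\ref{distfunc}). By homogeneity we normalize so that $\|F\|_{Lip} = 1$ and write $M = M(F)$.

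The first step is to consider the sublevel set $A = \{x \in S(N) : F(x) \leq M\}$, which by definition of the median satisfies $\Prob_N(A) \geq 1/2$. Since $F$ is $1$-Lipschitz with respect to the geodesic distance on the sphere, any $x$ with $F(x) > M + r$ must satisfy $d(x, A) \geq r$, so $\{F > M + r\}$ is contained in the complement of the open $r$-enlargement $A_r = \{x : d(x,A) < r\}$.

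Next I would apply L\'evy's spherical isoperimetric inequality: among measurable subsets of $S(N)$ of measure at least $1/2$, the hemisphere minimizes $\Prob_N(A_r)$. Consequently $\Prob_N(S(N) \setminus A_r)$ is bounded above by the measure of a spherical cap of geodesic radius $\pi/2 - r$. Re-expressing this cap measure in the coordinates underlying (\ref{distfunc}) yields a quantity of the form $(1 - \sin^2 r)^{N/2 - 1} = (\cos r)^{N-2}$, up to a bounded multiplicative correction that can be absorbed into the factor $2$ on the right-hand side of (\ref{measconc}). The elementary inequality $\log \cos r \leq -r^2/2$ on $[0, \pi/2)$ then gives the one-sided estimate $\Prob_N(F > M + r) \leq e^{-(N-2)r^2/2}$. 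Applying the same reasoning to $-F$ controls the opposite tail $\{F < M - r\}$, producing the factor of $2$, and restoring the Lipschitz scaling completes (\ref{measconc}).

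The principal obstacle is the invocation of L\'evy's isoperimetric theorem, whose proof via two-point symmetrization or spherical rearrangement is a nontrivial classical result. An alternative strategy that avoids it altogether is to radially extend $F$ to $\wt{F}(x) = F(x/|x|)$ on $\R^N \setminus \{0\}$ and invoke the Gaussian concentration inequality of Borell--Sudakov--Tsirelson for $\wt{F}$, exploiting the fact that the standard Gaussian $\gamma_N$ is concentrated in the annulus $|x| \sim \sqrt{N}$, on which $\wt{F}$ has Lipschitz constant $\asymp 1/\sqrt{N}$; one must then separately control the small-probability event $|x| \ll \sqrt{N}$ and extract the sharp constant $N-2$ by a final appeal to (\ref{distfunc}).
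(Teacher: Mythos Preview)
The paper does not actually prove this proposition; it is stated as a classical fact with a reference to Ledoux's monograph \cite{Led}, followed only by the remark that the exponent governs the width of the distribution. So there is nothing in the paper to compare your argument against beyond that citation.

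That said, your sketch is the standard route and is essentially correct. One small point worth tightening: your use of (\ref{distfunc}) is in fact \emph{clean}, not merely ``up to a bounded multiplicative correction''. The cap $\{x\in S(N): x_1>\sin r\}$ (a single coordinate) is contained in the set $\{|x_{(1,2)}|>\sin r\}$ (the two-coordinate block of (\ref{distfunc})), since $|x_{(1,2)}|\ge |x_1|$. Hence (\ref{distfunc}) gives directly
\[
\Prob_N\bigl(\text{cap of geodesic radius }\tfrac{\pi}{2}-r\bigr)\le (1-\sin^2 r)^{N/2-1}=(\cos r)^{N-2}\le e^{-(N-2)r^2/2},
\]
with no extra constant; the factor $2$ in (\ref{measconc}) arises solely from combining the two tails $\{F>M+r\}$ and $\{F<M-r\}$. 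This also explains why the exponent is $N-2$ rather than the $N-1$ one sometimes sees: it is exactly the price of bounding a one-coordinate cap by a two-coordinate tube in order to invoke the explicit formula (\ref{distfunc}). Your alternative Gaussian route via Borell--Sudakov--Tsirelson would work to give some exponent $cN$, but recovering the precise constant $N-2$ that way is awkward; the isoperimetric argument you outlined first is the right one here.
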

The factors in the exponential essentially say that the width of the distribution for $F$ is on the order of $\frac{\|F\|_{Lip}}{\sqrt{N}}$ for $N$ large.  Please see \cite{Led} for further results on measure concentration.

\subsection{Probabilistic decomposition of $L^2(M)$} 

Although the results we present were inspired by the eigenfunction treatment in \cite{BL}, these results were mainly intended for probabilistic applications towards the damped and non-linear wave equations.  The probabilistic treatment in \cite{BL} uses the dyadic Littlewood-Paley decomposition in harmonic analysis through eigenfunction clusters. For more on the relationship between Littlewood-Paley theory and probabilistic methods, please see \cite{BL}.

\subsubsection*{General manifolds}
Take two $h$ dependent sequences $a_h$ and $b_h$ (with limits $a$ and $b$, respectively) with the relation that $a_{h} < b_{h}$ for all $h \in (0, 1]$ and
\begin{equation*}
0 \leq \lim_{h \rightarrow 0} a_{h} \leq \lim_{h} b_{h}.
\end{equation*}
Furthermore, in the case that $a = \lim a_{h} = b= \lim b_{h}$ exist, we impose $a>0$ and that the rate of decay cannot be too fast, i.e.
\begin{equation*}
D h \leq b_{h} - a_{h}
\end{equation*}
for some large constant $D>0$ that we will specified shortly. 

As $(M,g)$ is a compact manifold with the semiclassical relation $-h_j^2\Delta_{g} \phi_j = \phi_j$ and $L^2$ orthonormal basis of eigenfunctions $\{ \phi_{j} \}_{j=1}^{\infty}$, we can consider the set
\begin{equation*}
I_{h} := \{ k \in \N \, : \, h h_k^{-1} \in (a_{h}, b_{h}] \}
\end{equation*}
and the spectral cluster
\begin{equation*}
E_{h} = \{ u = \sum_{k \in I_{h}} z_k \phi_k(x), \, z_k \in \C \},
\end{equation*}
which is independent of the choice of eigenbasis.  Lastly, set $N_{h} = dim_{\C} E_{h}$.  The Weyl formula, in its semiclassical form (please see \cite{Zw}) where $\lambda^{-1} = h$, states that there exists a constant $C>0$ independent of $h$ such that
\begin{equation} \label{weyl}
\big| N_h - c_n \frac{Vol(M)}{(2\pi)^n} \big( (h^{-1}b_h)^n - (h^{-1} a_h)^n \big)  \big| \leq C h^{-(n-1)}.
\end{equation}
A further calculation shows that $N_h$ is asymptotic to either $c_n \frac{Vol(M)}{(2\pi)^n}h^{-n}(b_h^n - a_h^n)$ when $0 \leq a < b$ or $c_n n a^{n-1} \frac{Vol(M)}{(2\pi)^n}h^{-n} \big( (b_h - a_h) + \mathcal{O}(h + (b_h - a_h)^2 + (b_h - a_h)|a-a_h|) \big)$ when $0< a = b$.

In either case, there exists $D_M > 0$ and constants $0< \alpha < \beta$ such that for $D_M h \leq b_h - a_h$ and $h \in (0, 1]$,  
\begin{equation} \label{remterm}
\alpha h^{-n} (b_h - a_h) \leq N_h \leq \beta h^{-n} (b_h - a_h)
\end{equation}
which implies $N_h \geq 2$.  We choose the constant $D_M>0$ in this particular way in order to guarantee that $N_h$ properly goes to infinity with $h$, which is important for our asymptotic results.  Otherwise, we may be in a geometrical setting where this is not the case.  For instance, choosing $D_M=\frac{1}{2}$ in the case of the $S^2$ with the round metric would give us $N_h = 0$ for infinitely many $h$.

Although we can consider windows of growing size, we will be primarily concerned with the case $b_h = 1 + D_Mh$ and $a_h = 1$, with the spaces $E_h \subset L^2(M)$ corresponding to the spectral window $(1, 1+ hD_M]$.  Moreover, it follows that the quantity $N_h \simeq h^{-(n-1)}$, suggesting to us that it should be thought of as the remainder term in the semiclassical form of the Weyl law.

\subsubsection*{Aperiodic manifolds}

As the case of small-length spectral windows $(1, 1 + hD]$, for $D>0$ arbitrarily small but fixed, is also of interest, we must make a further dynamical assumption on $M$ in order to obtain (\ref{remterm}).  First, we will review some important results.  For the boundaryless situation, Duistermaat and Guillemin \cite{DG} proved that if the set of periodic geodesics forms a set of measure zero in $S^*M$, then for any $\epsilon > 0$
\begin{equation*}
\big| \#\{k : \, h \lambda_k \in (a_{h} , b_h] \} - c_n \frac{Vol(M)}{(2\pi)^n} \big( (h^{-1}b_h)^n - (h^{-1} a_h)^n \big)  \big| \leq \epsilon h^{-(n-1)},
\end{equation*}
for all $h \leq h_0(\epsilon)$.  This is equivalent to saying that the remainder term $R(\lambda)$ in the Weyl law is $o(\lambda^{n-1})$.  Compact manifolds with this assumption will be referred to as being ``aperiodic".  In the case of manifolds with boundary, Weyl conjectured that there exist two-term asymptotics with $R(\lambda) = o(\lambda^{n-1})$; this was proved in the 1980s by Ivrii \cite{Ivr} under the same measure zero assumption for the billiard flow (this result was also proved independently by Melrose \cite{Mel} using some stronger assumptions involving convexity) .

Hence, choosing $\epsilon_0 > 0$ small enough implies there exists $\beta > \alpha >0$ such that
\begin{equation} \label{remterm2}
\alpha h^{-(n-1)} \leq N_h \leq \beta h^{-(n-1)}
\end{equation}
for $h \in [0, h_0(\epsilon_0) )$ in the case of small-length $D$ spectral windows.

\subsubsection*{Probability measure}
We are able to endow the unit sphere $S_h \subset E_h$, with respect to the $L^2$ norm, with the uniform probability measure $\Prob_{N_h}$ defined in the previous section.  An important set of random variables we will consider, for $u \in S_h$, are defined in the following way:
\begin{equation} \label{specclust}
ev_x(u) = u(x) = \sum_{h \lambda_k \in (a_h, b_h]} z_k \phi_k(x) = \langle z, \frac{b_{x,h}}{|b_{x,h}|} \rangle  |b_{x,h}| 
\end{equation}
for $b_{x,h} = (\phi_k(x))_{k=1}^{N_h}$.  Clearly, the above representation only holds for $x$ such that $b_{x,h} \neq 0$, and is the only situation of relevance to our results as we will be computing probabilites of positive quantities.    Notice that $|b_{x,h}|^2 = \frac{N_h}{Vol(M)} + \mathcal{O}(h^{-(n-1)})$ by the pointwise Weyl law \cite{Ho}.

For the simplicity of notation, we will abbreviate $\Prob_{N_h}$ by using the symbol $\Prob_h$ in the later sections.

\section{Generalized Kuznecov trace formulas}

The tool of trace formulas has a rich history and has deeply influenced the spectral theory of automorphic forms, as well as spectral geometry.  Even to today, microlocal analysts are finding new ways to use trace formulas to derive spectral data on manifolds via dynamical information.  For an extensive survey on the far-reaching effects of semiclassical trace formulae and their applications towards the spectral theory of the Laplace-Beltrami operator, please see the survey \cite{CdV} of Colin de Verdiere.

For our purposes, we will be concentrating only on the Kuznecov trace formula \cite{Ku}, whose original formulation was  
\begin{equation*}
\int_{Y_1} \int_{Y_2} U(t,x,y) \, d \mu_1(x) \, d \mu_2(y) = \sum_{[\sigma] \in \Gamma_{Y_1} \backslash (\Gamma / \Gamma_{Y_2})} I_t( [\sigma] ),
\end{equation*}
where $U(t,x,y)$ is the full half-wave kernel $e^{-it \sqrt{\Delta}}$ on a compact quotient $M=\Gamma / D$ of a non-compact symmetric space $D$, $Y_i$ are closed geodesics for $i=1,2$ on $M$, $\Gamma_{Y_i}$ is the isotropy group in $\Gamma$ of $Y_{i}$, and $I_t([\sigma])$ is a distribution on $\R$ that is invariantly associated to $[\sigma]$.  This notation was generalized to compact Riemannian manifolds by Zelditch \cite{Zel}.


We now provide the spectral asymptotics that were computed in \cite{Zel} from the Kuznevoc trace formula.  The following is a key estimate in the proof of Theorem \ref{maintheorem}.

\begin{prop}
For a closed submanifold $S$ of dimension $d$ with surface measure $d \sigma$, we have
\begin{equation*}
\bigg| \sum_{\{j: h_j^{-1} \leq h^{-1}\}} \big| \int_S \phi_j(s) \, d \sigma(s) \big|^2 - c_n Vol(S(N^*S))h^{-(n-d)} \bigg| \leq c_S h^{-(n-d)+1}
\end{equation*}
for $N^*S$ denoting the normal bundle over $S$ and $S(N^*S)$ denoting its unit sphere bundle.
\end{prop}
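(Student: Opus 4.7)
The plan is to follow Zelditch's strategy, which expresses the spectral sum as a smoothed trace of the half-wave propagator, computes its short-time microlocal structure via the Hörmander parametrix, and concludes via a Tauberian theorem.

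First, I would introduce the restriction operator $R_S : C^\infty(M) \to C^\infty(S)$, $R_S u = u|_S$, whose $L^2$-adjoint $R_S^* : \ecal'(S) \to \dcal'(M)$ is pushforward against $d\sigma$. Writing $\int_S \phi_j\,d\sigma = \lla \phi_j, R_S^* 1_S\rra_{L^2(M)}$, the spectral sum of interest takes the form
$$
N(\la) := \sum_{h_j^{-1}\leq \la}\bigg|\int_S \phi_j\,d\sigma\bigg|^2 = \sum_{h_j^{-1}\leq\la} |\lla\phi_j, R_S^* 1_S\rra|^2,
$$
namely the spectral distribution function of the vector $R_S^* 1_S$ with respect to $\sqrt{-\Delta_g}$.

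Next, pick $\rho \in \scal(\R)$ with $\hat\rho \in C_c^\infty(\R)$ supported near $0$ and equal to $1$ at the origin, and chosen so that $\supp \hat\rho$ avoids the nonzero part of the length spectrum of $S$-to-$S$ geodesic arcs. Then
$$
(dN * \rho)(\la) = \sum_j \rho(\la - h_j^{-1})\bigg|\int_S\phi_j\,d\sigma\bigg|^2 = \frac{1}{2\pi}\int_\R \hat\rho(t)\,e^{it\la}\bigg(\int_S\!\!\int_S U(-t, s_1, s_2)\,d\sigma(s_1)\,d\sigma(s_2)\bigg)\,dt,
$$
where $U(t) = e^{-it\sqrt{-\Delta_g}}$. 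I would then substitute the standard Hörmander parametrix for $U(t)$ near $t = 0$, an order-zero FIO with phase essentially $\lla x - y,\xi\rra - t|\xi|_{g(y)}$, restrict its kernel to $S \times S$, and apply stationary phase. Rescaling $\xi = \la \eta$ converts the $t$-integral into a semiclassical oscillatory integral whose critical set is $\{s_1 = s_2,\ t = 0,\ \eta \in N^*_{s_1}S,\ |\eta|_g = 1\}$; stationary phase along this $n$-dimensional critical manifold yields
$$
(dN * \rho)(\la) = c_n\,\vol(S(N^*S))\,(n - d)\,\la^{n - d - 1} + \mathcal{O}(\la^{n - d - 2}),
$$
the factor $\vol(S(N^*S))$ arising from the integration over the unit conormal sphere at each $s \in S$.

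Finally, I would invoke the Hörmander Tauberian theorem. Since $N(\la)$ is monotone nondecreasing and $(dN*\rho)(\la)$ satisfies the asymptotic just derived, one concludes $N(\la) = c_n\,\vol(S(N^*S))\,\la^{n - d} + \mathcal{O}(\la^{n - d - 1})$, and setting $\la = h^{-1}$ gives the claimed inequality. The main obstacle in carrying this out rigorously is the stationary phase step: one must extract the principal symbol of $R_S\,U(t)\,R_S^*$ at $t = 0$ with the correct normalization of half-densities, Maslov factor, and powers of $2\pi$, in order to recover the precise constant $c_n\,\vol(S(N^*S))$. This is naturally done in Fermi coordinates adapted to $S$, in which the covector $\xi$ splits into tangential and normal components and the integration over the normal fiber explicitly produces $\vol(S(N^*S))$; once this local normal form is in hand, the computation is routine.
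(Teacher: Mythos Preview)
The paper does not prove this proposition; it is quoted from Zelditch \cite{Zel} as a black-box input (``We now provide the spectral asymptotics that were computed in \cite{Zel} from the Kuznecov trace formula''), so there is no proof in the paper to compare against. Your sketch is a correct outline of the standard argument and is essentially the one Zelditch gives: smooth the counting measure $dN$ against a Schwartz function with small Fourier support, rewrite the result as $\int_S\int_S U(-t,s_1,s_2)\,d\sigma\,d\sigma$ tested against $\hat\rho(t)e^{it\la}$, analyze the short-time wave kernel restricted to $S\times S$, and conclude by a Tauberian argument. Zelditch's presentation is phrased in the global FIO/Lagrangian-distribution calculus rather than via an explicit Hörmander parametrix plus stationary phase, but the content is the same and your version is arguably more transparent for extracting the leading constant.

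One small slip worth flagging: the critical manifold you describe, $\{s_1=s_2,\ t=0,\ \eta\in S(N^*_{s_1}S)\}$, has dimension $d+(n-d-1)=n-1$, not $n$; this does not affect the outcome but you should redo the bookkeeping when you carry out the stationary phase to make sure the power of $\la$ and the $2\pi$ factors come out right. Your identification of the ``main obstacle''---tracking half-densities, Maslov data, and normalizations in Fermi coordinates to pin down $c_n\,\vol(S(N^*S))$---is exactly where the work lies.
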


Now, let us set
\begin{equation*}
E(h, S) := \sum_{j: h_j^{-1} \leq h^{-1}} | \int_S \phi_j(s) d \sigma(s)|^2,
\end{equation*}
which is also independent of the choice of eigenbasis.  We can consider an analogous quantity to $N_h$, which we will label as $N(S)_h$ and set it as
\begin{equation*}
N(S)_h := E(h^{-1}b_h, S) - E(h^{-1}a_h, S).
\end{equation*}
Moreover, 
\begin{equation} \label{newweyl}
\big| N(S)_h - c_nVol(S(N^*S)) \big( (h^{-1}b_h)^{n-d} - (h^{-1}a_h)^{n-d} \big) \big| \leq C_S h^{-((n-d)-1)}
\end{equation}
for $h \in (0, 1]$. Furthermore, we set $b_{S,h} :=(\int_S \phi_j(s) d \sigma(s))_{j \in I_h} \in \C^{N_h}$.  Similarly to the pointwise Weyl law after (\ref{specclust}), $N(S)_h = |b_{S,h}|^2$. 

Following the calculations after (\ref{weyl}), we see that there exists $\tilde{D}_S > 0$ and constants $0< \alpha < \beta$ such that for $\tilde{D}_S h \leq b_h - a_h$ and $h \in (0, 1]$,  
\begin{equation} \label{newremterm}
\alpha h^{-(n-d)} (b_h - a_h) \leq N(S)_h \leq \beta h^{-(n-d)} (b_h - a_h).
\end{equation}


\section{Proof of main theorem}

The main idea in the article \cite{BL} is that by having spectral asymptotics associated to a summation formula (in our case, the eigenvalue counting function and the Kuznecov sum formula), we can obtain explicit expected values and deviation estimates.  Therefore, let us consider the random variable defined by
\begin{equation} \label{innerprod2}
\int_S u(s) d \sigma(s) = \int_S \sum_{k \in I_h} z_ke_k(s) d \sigma(s) = \langle z, b_{S,h} \rangle = \langle z, \frac{b_{S,h}}{|b_{S,h}|} \rangle |b_{S,h}|,
\end{equation}
where $d \sigma$ is the surface measure on $S$. Once again, we note that the above representation only holds for $|b_{S,h}| \neq 0$, which is exactly our case since we will compute $\Prob_h[| \int_{S} u| > \lambda]$ for $\lambda \geq 0$. 

We note again that it is useful to view $N(S)_h=|b_{S,h}|^2$ as sort of remainder term for the asymptotics of $E(h^{-1}b_h, S)$.  For instance, if we again consider the spectral window of large size $D$, i.e. $b_h = 1 + hD$ and $a_h = 1$ where $D$ is the supremum of the constants in (\ref{remterm}) and (\ref{newremterm}), then it is clear  that $N(S)_h \simeq h^{-((n-d)-1)}$.  Notice that for $n=2$ and $d=1$, $N(S)_h \simeq 1$.

\begin{lem} \label{Lip1}
For an integral $p \geq 1$ and $u \in S_h$, there exists $C_p>0$ such that for $h \in (0, 1]$, we have 
\begin{equation*}
\| F_p(u) \|_{Lip} \leq C_p (N(S)_h)^{p/2}
\end{equation*}
where $F_p(u) = | \int_S u(s) d \sigma(s) |^p = F_1(u)^p$.
\end{lem}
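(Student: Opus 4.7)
The plan is to reduce the bound to a Cauchy--Schwarz estimate for the linear functional $u\mapsto \int_S u\,d\sigma$ and then handle the $p$-th power by a mean-value type inequality, using the identity $|b_{S,h}|^2=N(S)_h$ from the previous section.

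First I would treat $p=1$. Identifying $u\in S_h$ with its coefficient vector $z\in \C^{N_h}$ on the unit sphere (so that $|z|=1$), the representation (\ref{innerprod2}) gives $\int_S u\,d\sigma=\langle z,b_{S,h}\rangle$. Hence for any $u_1,u_2\in S_h$ with coefficient vectors $z_1,z_2$, the reverse triangle inequality and Cauchy--Schwarz give
\begin{equation*}
|F_1(u_1)-F_1(u_2)| \leq |\langle z_1-z_2,b_{S,h}\rangle| \leq |z_1-z_2|\cdot |b_{S,h}|.
\end{equation*}
Since the Euclidean chord distance is bounded by the geodesic distance on the unit sphere, and $|b_{S,h}|=\sqrt{N(S)_h}$, this yields $\|F_1\|_{\mathrm{Lip}}\leq \sqrt{N(S)_h}$, which is the desired bound for $p=1$ with $C_1=1$.

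Second, for general integer $p\geq 1$, I use $F_p=F_1^p$ together with the uniform bound $F_1(u)=|\langle z,b_{S,h}\rangle|\leq |z|\cdot|b_{S,h}|=\sqrt{N(S)_h}$ that holds for every $u\in S_h$. Applying the elementary inequality $|a^p-b^p|\leq p\,\max(a,b)^{p-1}|a-b|$ for $a,b\geq 0$ to $a=F_1(u_1)$, $b=F_1(u_2)$, we obtain
\begin{equation*}
|F_p(u_1)-F_p(u_2)| \leq p\bigl(\sqrt{N(S)_h}\bigr)^{p-1}|F_1(u_1)-F_1(u_2)| \leq p\,\bigl(N(S)_h\bigr)^{p/2}\, d_g(z_1,z_2),
\end{equation*}
so we may take $C_p=p$.

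There is no real obstacle here beyond bookkeeping: the key point is that the sup-norm bound on $F_1$ and the linear Lipschitz bound on $F_1$ are both of order $\sqrt{N(S)_h}$, which is why the $p$-th power naturally picks up the factor $(N(S)_h)^{p/2}$. The only potential subtlety is the complex-vs-real identification of $S_h$ with a Euclidean sphere, but since everything depends only on the $L^2$ inner product structure and the absolute value $|\cdot|$, the Cauchy--Schwarz estimate and the chord-vs-geodesic comparison carry over verbatim to the complex setting.
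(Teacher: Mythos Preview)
Your proof is correct and follows essentially the same approach as the paper: both use Cauchy--Schwarz on the inner-product representation $\int_S u=\langle z,b_{S,h}\rangle$ together with the chord-versus-geodesic comparison, and both control the $p$-th power via the factorization $a^p-b^p=(a-b)\sum_{j=0}^{p-1}a^{p-1-j}b^j$ (your inequality $|a^p-b^p|\le p\max(a,b)^{p-1}|a-b|$ is just this telescoping sum bounded termwise). The only cosmetic difference is that the paper normalizes by $|b_{S,h}|$ before bounding the cross terms by $1$, whereas you keep the unnormalized form and insert the uniform bound $F_1\le \sqrt{N(S)_h}$; both yield $C_p=p$.
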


\begin{proof}

We have, after (\ref{innerprod2}) with $u = \sum_{k\in I_h} z_k e_k$ and $v = \sum_{k\in I_h} w_k e_k$, that
\begin{equation*}
\bigg| \big|\int_S u \big|^p - \big| \int_S v \big|^p \bigg| = \bigg| \big| \langle z, \frac{b_{S,h}}{|b_{S,h}|} \rangle |b_{S,h}| \big|^p  - \big| \langle w, \frac{b_{S,h}}{|b_{S,h}|} \rangle |b_{S,h}| \big|^p  \bigg|.
\end{equation*}
Using the fact that $u,v \in S_h$ gives us
\begin{align*}
|b_{S,h}|^p \cdot \bigg| \big| \langle z, \frac{b_{S,h}}{|b_{S,h}|} \rangle \big|  - \big| \langle w, \frac{b_{S,h}}{|b_{S,h}|} \rangle \big|  \bigg| \cdot & \bigg( \sum_{j=0}^{p-1} \big| \langle z, \frac{b_{S,h}}{|b_{S,h}|} \rangle \big|^{p-1-j} \cdot \big| \langle w, \frac{b_{S,h}}{|b_{S,h}|} \rangle \big|^j \bigg), \\
&  \leq p |b_{S,h}|^p \cdot \big| \langle z-w, \frac{b_{S,h}}{|b_{S,h}|} \rangle  \big|. \\
\end{align*}
Also $|b_{S,h}|^p = N(S)_h^{p/2}$, and applying the Cauchy-Schwarz inequality once more shows
\begin{equation*}
\bigg| \big|\int_S u \big|^p - \big| \int_S v \big|^p \bigg| \leq p \cdot N(S)_h^{p/2} \| u-v \|_{L^2(M)}.
\end{equation*}
Finally, as $\| u-v \|_{L^2(M)} = |z-w| \leq C \cdot dist_{S^{N_h - 1}}(z,w) = C \cdot dist_{S(N_h)}(u,v)$, our proposition is proved.
\end{proof}

\begin{theo}(Average value $A_{p,h}$) \label{averval}
\begin{equation*}
A_{p,h} = \E_h \big[ | \int_S u |^p \big] = p \cdot N(S)_h^{p/2} \beta(\frac{p}{2}, N_h),
\end{equation*}
which as $h \rightarrow 0$ gives us the asymptotic
\begin{equation*}
A_{p,h} = C_p \cdot (N(S)_h /N_h)^{p/2} \cdot (1 + o(1))
\end{equation*}
for some $C_p > 0$.
\end{theo}

\begin{proof}
It is known that
\begin{equation*}
\E_h\bigg[ | \int_S u |^p \bigg] = \int_0^{\infty} p \lambda^{p-1} \Prob_h \bigg[ \big| \int_S u \big| > \lambda \bigg] \, d\lambda.
\end{equation*}
Furthermore, after using (\ref{innerprod2}) we obtain 
\begin{equation*}
\Prob_h \bigg[ \big| \int_S u \big| > \lambda \bigg]  = \Prob_h \bigg[ \big|  \langle z, \frac{b_{S,h}}{|b_{S,h}|} \rangle  \big| > \frac{\lambda}{|b_{S,h}|} \bigg].
\end{equation*}
Therefore, after using (\ref{distfunc}), we see that
\begin{align*}
\E_h \bigg[ | \int_S u |^p \bigg] &=p \int_0^{\infty} \lambda^{p-1} \cdot \mathbf{1}_{\frac{\lambda}{|b_{S,h}|} \in [0,1]} \cdot (1 - \frac{\lambda^2}{|b_{S,h}|^2})^{N_h - 1} \, d \lambda \\
&= p \cdot |b_{S,h}|^p \int_0^{1} \eta^{p-1} \cdot (1 - \eta^2)^{N_h - 1} \, d \lambda \\
&= p \cdot |b_{S,h}|^p \cdot \beta(\frac{p}{2}, N_h).
\end{align*}
Recalling that the beta function has the closed form $\beta(\frac{p}{2}, N_h) = \frac{\Gamma(p/2) \Gamma(N_h)}{\Gamma((p/2) + N_h)}$, and as $N_h \rightarrow \infty$ we have the following well-known asymptotic for fixed $p$:
\begin{equation*}
\beta(\frac{p}{2}, N_h) = \Gamma(p/2) N_h^{-p/2} (1 + o_h(1)).
\end{equation*}
Using $N(S)_h=|b_{S,h}|^2$ completes our proof.
\end{proof}

\begin{theo} (Estimation of the median $M_{p,h}$ and large deviation) \label{estmed}

For all $p \in \Z^+$ and $h \in [0, 1)$,
\begin{equation} \label{medbound}
0 \leq M_{p,h} \leq 2^{p/2}(A_{2,h})^{p/2}
\end{equation}
and the following deviation estimates hold in the range $p < \frac{n}{n-d}$ and $h \in (0, h_0]$:
\begin{equation} \label{medconv}
\bigg|  A_{p,h}- M_{p,h}  \bigg| \leq \frac{\pi}{2} \cdot \big(\frac{2N(S)_h^{p}}{N_h-2} \big)^{\frac{1}{2}}
\end{equation}
and
\begin{equation} \label{dev}
\Prob_h \big( | F_p(u) - M_{p,h} | > r \big) \leq 2 \exp \big(-\frac{N_h-2}{p(N(S)_h)^p} r^2 \big).
\end{equation}
\end{theo}

\begin{proof}
Equation (\ref{medbound}) follows immediately from properties of the median and Chebyshev's inequality.  Then, equation (\ref{medconv}) is just another direct calculation after applying (\ref{measconc}).  Observe that 
\begin{align}
\nonumber \bigg|  A_{p,h} - M_{p,h}  \bigg| &= \bigg|  \|F_p(u)\|_{L^1(S(N_h)} - \|M_{p,h}\|_{L^1(S(N_h))}  \bigg| \\
\nonumber & \leq  \big\| F_p(u) - M_{p,h} \big\|_{L^1(S(N_h))}  \\
\nonumber & =  \int_0^{\infty}  \Prob_h \big[ \big| F_p(u) - M_{p,h} \big| > \lambda \big] \, d \lambda \\
\label{almostgam} & \leq  2 \int_0^{\infty} e^{-\frac{N_h-2}{N(S)_h^p} \lambda^2} \, d \lambda.
\end{align}
Hence, we obtain that (\ref{almostgam}) is equal to
\begin{equation*}
\frac{\pi}{2} \big( \frac{2 N(S)_h^p}{N_h - 2}\big)^{\frac{1}{2}} .
\end{equation*}

For the purpose of obtaining a nontrivial deviation estimate, we must have that $\lim_{h \rightarrow 0} \break \frac{2 N(S)_h^p}{N_h - 2} = 0$, which is only possible if $p < \frac{n}{n-d}$ after using (\ref{remterm}) and (\ref{newremterm}).  Lastly, (\ref{dev}) comes from the Lipschitz estimate on $F_p(u)$ and (\ref{measconc}).
\end{proof}

\begin{cor} \label{renomdev}
Let $\tilde{M}_{p,h}$ be the median value for the renormalized random variable $\tilde{F}_p(u) := \frac{F_p(u)}{pN(S)_h^{p/2}}$.  Then for all $p \in \Z^+$, there exists $C>0$ such that for all $h \in [0, 1)$,
\begin{equation} 
\Prob_h \big( | \tilde{F}_p(u) - \tilde{M}_{p,h} | > r \big) \leq C \exp \big(-N_h r^2 \big).
\end{equation}
\end{cor}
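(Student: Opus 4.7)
The plan is to obtain this corollary as an immediate consequence of the measure concentration inequality (\ref{measconc}) applied to the renormalized variable $\tilde{F}_p(u)$, exploiting the fact that the renormalization makes the Lipschitz seminorm uniformly bounded in $h$. The key is that the factor $pN(S)_h^{p/2}$ in the denominator of $\tilde F_p$ is chosen precisely to cancel the scaling of the Lipschitz norm obtained in Lemma \ref{Lip1}.

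First, I would observe that by the linearity of the Lipschitz seminorm, Lemma \ref{Lip1} yields
\begin{equation*}
\|\tilde{F}_p\|_{Lip} \;=\; \frac{\|F_p\|_{Lip}}{p\, N(S)_h^{p/2}} \;\leq\; \frac{C_p}{p},
\end{equation*}
which is a constant independent of $h$. Next, since positive scaling preserves medians, $\tilde{M}_{p,h} = M_{p,h}/(p\,N(S)_h^{p/2})$, so the event $\{|\tilde F_p(u) - \tilde M_{p,h}| > r\}$ coincides with $\{|F_p(u) - M_{p,h}| > r \cdot p N(S)_h^{p/2}\}$. Applying the concentration inequality (\ref{measconc}) directly to $\tilde{F}_p$ on the sphere $S(N_h)$ with the uniform Lipschitz bound above yields
\begin{equation*}
\Prob_h\bigl(|\tilde{F}_p(u) - \tilde{M}_{p,h}| > r\bigr) \;\leq\; 2\exp\!\left(-\frac{(N_h-2)\, r^2}{2(C_p/p)^2}\right).
\end{equation*}

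To finish, I would absorb constants: for $h$ small enough the hypothesis (\ref{remterm}) ensures $N_h \to \infty$, so $N_h - 2 \geq \tfrac{1}{2}N_h$ eventually, and the prefactor $2$ along with the coefficient $2(C_p/p)^2$ in the exponent can be absorbed into a single constant $C>0$ to yield the stated $C \exp(-N_h r^2)$ bound. For the finitely many $h$ where $N_h$ is small or $N_h - 2 \leq 0$, the bound is rendered trivial by choosing $C$ large enough so that $C\exp(-N_h r^2) \geq 1$ on that range. The estimate genuinely has no main obstacle — the only thing to verify is the cancellation $\|F_p\|_{Lip}/N(S)_h^{p/2} = O(1)$, which is precisely the content of Lemma \ref{Lip1}, and which motivates the choice of the particular normalization $pN(S)_h^{p/2}$ in the definition of $\tilde F_p$.
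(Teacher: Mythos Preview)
Your proposal is correct and matches the paper's (implicit) approach: the corollary is stated without a separate proof precisely because it follows at once from the Lipschitz bound of Lemma~\ref{Lip1} together with the concentration inequality~(\ref{measconc}) (equivalently, from~(\ref{dev}) by the substitution $r \mapsto r\cdot p\,N(S)_h^{p/2}$ that you describe). One small caveat: strictly speaking, absorbing a constant \emph{inside} the exponential into the prefactor $C$ is only legitimate when that constant is at least $1$; otherwise the correct form of the bound is $C\exp(-c\,N_h r^2)$ with a second constant $c>0$ depending on $p$---but this imprecision is present in the paper's own statement of the corollary, so your argument is at the same level of rigor.
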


\begin{rem}
Notice the deviation estimate (\ref{dev}) is meaningful only in the case $p=1$, as the deviation is larger than the average for $p > 1$ because $F_p(u) = F_1(u)^p$.  This is our reasoning for renormalizing in Corollary \ref{renomdev}.
\end{rem}

Using the asymptotics in (\ref{remterm}) and (\ref{newremterm}), along with Theorems \ref{averval} and \ref{estmed}, immediately gives us
\begin{cor}
For a spectral window of large enough constant size $D$, 
\begin{equation*}
A_{p,h} \simeq h^{dp/2} . \label{averageperiod}
\end{equation*}
Furthermore, the random variable $F_1(u)$ concentrates around the median $M_{1,h}$.
\end{cor}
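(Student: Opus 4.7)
The plan is to observe that this corollary is a direct consequence of the asymptotic formulas for $N_h$ and $N(S)_h$ established in Section 2.2 and 3, together with the expected-value and deviation results of Theorems \ref{averval} and \ref{estmed}. The only real content is specializing the window size to $b_h - a_h = hD$ and tracking the resulting powers of $h$.

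First, I would instantiate the large-enough constant $D$ so that both (\ref{remterm}) and (\ref{newremterm}) apply simultaneously; concretely, pick $D \geq \max\{D_M, \tilde D_S\}$. With $b_h - a_h = hD$, (\ref{remterm}) yields
\begin{equation*}
\alpha D\, h^{-(n-1)} \leq N_h \leq \beta D\, h^{-(n-1)},
\end{equation*}
so that $N_h \simeq h^{-(n-1)}$, and similarly (\ref{newremterm}) gives
\begin{equation*}
N(S)_h \simeq h^{-(n-d-1)}.
\end{equation*}
Taking the ratio, $N(S)_h / N_h \simeq h^d$, hence $(N(S)_h/N_h)^{p/2} \simeq h^{dp/2}$. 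Plugging this into the conclusion of Theorem \ref{averval}, namely $A_{p,h} = C_p (N(S)_h / N_h)^{p/2} (1 + o(1))$, and absorbing the $1 + o(1)$ factor into the constants in the $\simeq$ notation (legitimate on some interval $(0, h_0]$), produces $A_{p,h} \simeq h^{dp/2}$.

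For the concentration statement, I would apply the deviation estimate (\ref{dev}) of Theorem \ref{estmed} at $p = 1$, which gives
\begin{equation*}
\Prob_h\bigl(|F_1(u) - M_{1,h}| > r\bigr) \leq 2\exp\!\left(-\tfrac{N_h - 2}{N(S)_h}\, r^2\right).
\end{equation*}
Inserting the above asymptotics $N_h \simeq h^{-(n-1)}$ and $N(S)_h \simeq h^{-(n-d-1)}$ shows $(N_h - 2)/N(S)_h \simeq h^{-d}$ for $h$ small enough, so the exponent is bounded above by $-c\, h^{-d} r^2$ for some $c > 0$. Since the right-hand side tends to $0$ with $h$ for any fixed $r > 0$, this is exactly the claimed concentration of $F_1(u)$ around its median.

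There is no genuine obstacle here; the whole argument is a substitution. The only minor point to be careful about is that $p = 1$ lies in the admissible range $p < n/(n-d)$ required for the deviation estimate (\ref{dev}) to be nontrivial, which is immediate since $n/(n-d) > 1$ whenever $d \geq 1$. Everything else is just bookkeeping on powers of $h$ drawn from the Weyl law (\ref{weyl}) and its Kuznecov analogue (\ref{newweyl}).
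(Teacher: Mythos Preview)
Your proposal is correct and matches the paper's approach exactly: the paper simply states that the corollary follows immediately from the asymptotics (\ref{remterm}) and (\ref{newremterm}) together with Theorems \ref{averval} and \ref{estmed}, and you have spelled out precisely that substitution. Your observation that $p=1$ lies in the admissible range $p<n/(n-d)$ is the only check needed beyond bookkeeping, and it is handled correctly.
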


\begin{rem}
Under the aperiodicty assumption on $M$, we can consider windows of small constant length $D > 0$, but the asymptotic in Corollary \ref{averageperiod} becomes only a $\mathcal{O}(h^{pd/2})$ bound as we are unable to refine the estimate (\ref{newweyl}) further at this time.  However, the deviation estimate continues to hold.
\end{rem}

\begin{rem}
Based on the analysis made in the proofs of Theorems \ref{averval} and \ref{estmed}, we see that obtaining actual asymptotics with error terms for the eigenfunction periods comes down to deducing more precise asymptotics for the remainders of both the Weyl law and Kuznecov sum formula.  Given the various works that improve on the remainder estimate under different assumptions (for instance \cite{DG, Ivr} and the references therein), it is clear that this problem is difficult.
\end{rem}

\section{Proof of restricted versions of the Burq-Lebeau statistics}
We now present probabilistic restricted $L^q$ estimates in the spectral window $[1, 1+ Dh)$ with $D>0$ large in the general case and with $D>0$ small in the aperiodic case, for which the even periods $ \int_{\gamma} u^{2l} \, d \sigma $ for $l \in \Z^+$ follow immediately.  Before beginning, we want to make note that the proofs for the global $L^q$ bounds in Section 2.3 of \cite{BL} go through with minor changes, such as establishing the new Lipschitz estimate and the new deviation estimate.  

For the sake of simplicity, we write our theorems in the case of finite-length smooth curves in surfaces $\gamma \subset M$.  Using the general theorems in Burq, G\'erard, and Tzvetkov \cite{BGT} that give the corresponding Lipschitz estimates for lower dimensional submanifolds, the higher dimensional formulations of our theorems follow similarly.

\begin{prop} \cite{BGT} 
For the spectral window $[1, 1+ Dh)$ with $D>0$ large enough in the general case, and $[1, 1+ h)$ in the aperiodic case, there exists $C_{q}>0$ such that for $q \in [2, \infty)$ and $u \in S_h$, we have
\begin{equation*}
\big( \int_{\gamma} |u|^{q} \, d \sigma \big)^{1/q} = \| u \|_{L^q(\gamma)}  \leq C_{q} h^{-\delta(q)} \cdot \|u\|_{L^2(M)},
\end{equation*}
where
\begin{equation*}
\delta(q) = \begin{cases} \frac{1}{2} - \frac{1}{q}, & \mbox{ for } q \geq 4 \\ \frac{1}{4}, & \mbox{ for }  2 \leq q \leq 4. \end{cases} 
\end{equation*}
Moreover, the function $F_q(u) = \| u \|_{L^q(\gamma)}$ has Lipschitz norm $C_{q} \cdot h^{-\delta(q)} $.
\end{prop}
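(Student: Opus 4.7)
The plan is to deduce both claims from the spectral-cluster version of the Burq-G\'erard-Tzvetkov restriction estimate and a standard reverse triangle inequality. First I would invoke the BGT bound in its spectral projector form: if $\Pi_h$ denotes the $L^2$ orthogonal projection onto $E_h$, then for any $f \in L^2(M)$,
\begin{equation*}
\|\Pi_h f\|_{L^q(\gamma)} \leq C_q h^{-\delta(q)} \|f\|_{L^2(M)},
\end{equation*}
with $\delta(q)$ as stated. Since every $u \in S_h$ satisfies $\Pi_h u = u$ and $\|u\|_{L^2(M)}=1$, the $L^q$ bound on $\gamma$ follows at once. The window width condition enters only through Weyl asymptotics: to ensure the constants produced by the parametrix analysis are uniform in $h$, I need $N_h$ to scale correctly. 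In the general case this forces $D$ large (as in \eqref{remterm}), while in the aperiodic setting the Duistermaat-Guillemin improvement \eqref{remterm2} allows $D$ to be chosen arbitrarily small.

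For the Lipschitz property of $F_q(u) = \|u\|_{L^q(\gamma)}$, I would use the reverse triangle inequality on the $L^q(\gamma)$ norm. Given $u,v \in S_h$, their difference $u-v$ still lies in $E_h$, so the spectral cluster bound applies:
\begin{equation*}
|F_q(u) - F_q(v)| \leq \|u-v\|_{L^q(\gamma)} \leq C_q h^{-\delta(q)} \|u-v\|_{L^2(M)}.
\end{equation*}
Since the inclusion $(S_h, d_{S(N_h)}) \hookrightarrow (E_h, \|\cdot\|_{L^2(M)})$ is $1$-Lipschitz (the chord length is bounded by the geodesic distance on the sphere, exactly as used at the end of the proof of Lemma \ref{Lip1}), one concludes $\|F_q\|_{\operatorname{Lip}} \leq C_q h^{-\delta(q)}$ with respect to the metric used in the concentration of measure estimate \eqref{measconc}.

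The main obstacle I anticipate is the very first step: verifying that the BGT restriction estimate applies to the spectral clusters $E_h$ with uniform constants in $h$ and with the same $\delta(q)$ as for a single eigenfunction. This is essentially built into BGT's proof, which proceeds via a stationary phase analysis of the localized half-wave propagator at frequency $h^{-1}$ combined with a $TT^*$ reduction, so the estimate is really a cluster estimate to begin with; still, one should check that the window width $Dh$ (or $h$ in the aperiodic regime) is admissible for their parametrix. Once that is in place, the remainder of the proof is a short bookkeeping step as sketched above, and in particular the Lipschitz constant tracks the same exponent $\delta(q)$, which is what is needed to feed into the deviation estimate \eqref{measconc} for Theorem \ref{subcor}.
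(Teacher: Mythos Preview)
Your proposal is correct and matches the paper's approach: the paper says only that ``the proof of this proposition is similar to that of Lemma~\ref{Lip1},'' and your argument---invoke the BGT cluster estimate, apply the reverse triangle inequality for $\|\cdot\|_{L^q(\gamma)}$, then pass from the $L^2$ chord to the geodesic distance on $S(N_h)$---is exactly the intended fleshing-out of that remark. One small note: the window-width hypothesis is inherited from the probabilistic setup rather than being needed for the BGT parametrix, which already gives uniform constants for any window of width $O(h)$; but this does not affect the validity of your argument.
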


The proof of this proposition is similar to that of Lemma \ref{Lip1}. Now, using (\ref{measconc}) and the above estimate results immediately in

\begin{theo} \label{newlip}
Consider $u \in S_h$.   For any $q \in [2, \infty)$, there exists $c_{1,q} > 0$ such that for $h \in (0, h_0]$ with $L_{q,h}$ being the median value of $F_q(u) = \| u \|_{L^q(\gamma)}$,
\begin{equation*}
\Prob_h \big( \big| \|u\|_q - L_{q,h} \big| > r \big) \leq 2 e^{-c_{1,q} G(h) r^2},
\end{equation*}
where $G(h) = h^{-2/q}$ for $q \geq 4$ and $G(h) = h^{-1/2}$ for $2 \leq q \leq 4$.
\end{theo}

\begin{theo}
For $B_{q,h} = \E_h[\| u \|^q_q]^{1/q}$ and $q \in [2, \infty)$, there exists $h_0>0$ such that for $h \in (0, h_0]$ and $C(\gamma,q,M) = \frac{\Gamma(q/2)^{1/q} (Vol(\gamma))^{1/q}}{2^{1/q}\sqrt{2 e Vol(M)}} >0$ we have
\begin{equation} \label{lqavg}
B_{q,h} \simeq C.
\end{equation}
It follows that for a given $q \in [2, \infty)$, there exists $c_1(q, \gamma, M)>0$ such that for all $h \in (0, 1]$,
\begin{equation} \label{meanupper}
 L_{q,h} \leq c_{1}.
\end{equation}  
And, there exists $c_2(q, \gamma, M)>0$ and $h_0>0$ such that for all $h \in (0, h_0]$,
\begin{equation} \label{meanlower}
c_{2} \leq L_{q,h}.
\end{equation}
\end{theo}

\begin{rem}
The constants $c_1(q, \gamma, M)$ and $c_2(q, \gamma, M)$ can be computed in terms of $C(q,\gamma, M)$; please see the following proof.
\end{rem}

\begin{proof}
We re-do the proof of Theorem 4 in \cite{BL}, with the necessary modifications, for the facility of the reader.  Notice that 
\begin{equation*}
\E_h\big[ |g|^q \big] = q \int_{0}^{\infty} \lambda^{q-1} \Prob_h(|g| > \lambda) \, d \lambda,
\end{equation*}
and we obtain, after applying Fubini and (\ref{distfunc}), that
\begin{align*}
\E_h( \| u \|_q^q) & = \int_{\gamma} \int_{S_h} |u(s)|^q \, d \sigma(s) \, d \Prob_h = q\int_{\gamma} \int_0^{\infty} \lambda^{q-1} \Prob_h(|u(s)| > \lambda) \, d \lambda \, d\sigma(s) \\
& = q \int_{\gamma} \int_0^{|b_{s,h}|} \lambda^{q-1} (1-\frac{\lambda^2}{|b_{s,h}|^2})^{N_h-1} \, d \lambda \, d \sigma(s) \\
& = q\big( \int_{\gamma} |b_{s,h}|^{q} \, d \sigma(s) \big) \int_0^1 z^{q-1}(1-z^2)^{N_h-1} \, dz = (B_{q,h})^q. \\
\end{align*}
 We note that 
\begin{equation*}
 \int_0^1 z^{q-1}(1-z^2)^{N_h-1} \, dz = (1/2) \beta(q/2, N_h) = \frac{1}{2} \cdot \Gamma(q/2) \cdot N_h^{-q/2}(1 + o(1)) 
\end{equation*}
for $h \in (0, h_0]$ when $q$ is fixed using basic beta function asymptotics, while 
\begin{equation*}
\int_{\gamma} |b_{s,h}|^{q} \, d \sigma(s)  \simeq \frac{Vol(\gamma)}{Vol(M)^{q/2}}N_h^{q/2} \text{ for } h \in (0, h_0]
\end{equation*}
as $|b_{x,h}|^2 = \frac{N_h}{Vol(M)} + \mathcal{O}(h^{-(n-1)})$ for all $x \in M$ by the pointwise Weyl law on $M$ (see \cite{Ho} for the case of length $D$ windows and \cite{DG, Ivr} for the case of small-length windows with $o(h^{-(n-1)})$ remainder estimate) and proceeding similarly as to obtaining (\ref{weyl}).  Using Stirling's formula for the gamma function $\Gamma(t)$ and the pointwise Weyl law once more, it follows that
\begin{equation} \label{gammaupper}
B_{q,h}^q \leq q C_1^q \big( \frac{N_h}{N_h + (q/2)} \big)^{N_h + (q/2) - (1/2)} \Gamma(q/2)
\end{equation}
for $C_1>0$ and all $h \in [0,1)$.  Hence $B_{q, h} \leq C_{2,q}$ for any $q \in [2, \infty)$ and $h \in [0,1)$, where $C_{2,q}>0$.  Furthermore, we have that 
\begin{equation} \label{almostmean}
B_{q,h} \simeq \frac{\Gamma(q/2)^{1/q}}{2^{1/q}\sqrt{2 e Vol(M)}} (Vol(\gamma))^{1/q}
\end{equation}
as $h \rightarrow 0$ and (\ref{lqavg}) follows.

The inequality (\ref{meanupper}) now follows from Chebyshev's inequality in the following way.  Clearly,
\begin{equation*}
\Prob_h[\|u\|_q > t] \leq \frac{1}{t^q} \E_h[\| u \|_q^q] = (B_{q,h}/t)^q.
\end{equation*}
Setting $t = L_{q,h}$ and using properties of the median, we obtain $L_{q,h} \leq 2^{1/q} B_{q,h}$ and (\ref{meanupper}) follows. We will use (\ref{almostmean}) to prove our last inequality (\ref{meanlower}).

Observe that
\begin{equation*}
| B_{q,h} - L_{q,h}|^q = | \, \|F_q \|_{L^q(S(N_h))} - \|L_{q,h} \|_{L^q(S(N_h))} \, |^q \leq \| F_q - L_{q,h} \|_{L^q(S(N_h))}^q.
\end{equation*}
In our case, as opposed to that in \cite{BL}, our Lipschitz estimate for $F_q$ is $C_{q} \cdot h^{-\delta(q)}$.  Hence, after using Corollary \ref{newlip}, we obtain
\begin{align*}
\| F_q - L_{q,h} \|_{L^q(S(N_h))}^q = & q \int_0^{\infty} \lambda^{q-1} \Prob_h(| F_q - L_{q,h} | > \lambda) \, d \lambda \\
& \leq 2q \int_0^{\infty} \lambda^{q-1} \exp(-c_1 h^{-\delta(q)} \lambda^2) \, d \lambda = \frac{2q}{c_1^{q/2}} h^{q \delta(q)/2} \Gamma(q/2).
\end{align*}
Taking $q$th roots shows the right-hand side is bounded above by $C_{3,q} h^{\delta(q)/2}$ for $C_{3,q}>0$, which converges to $0$ as $h \rightarrow 0$ for $q$ fixed.  This convergence along with the estimate (\ref{almostmean}) concludes the proof of (\ref{meanlower}).

\end{proof}


\section{Remarks}

\begin{rem}
We will now provide a simple argument that gives a $o(1)$ estimate for (\ref{densityone}) without using the Kuznecov trace formula. Let us recall the ``QER" result of \cite{TZ}.  

\begin{theo}
Let $(M, g)$ be a compact surface with ergodic geodesic flow, and $\gamma \subset M$ be a closed curve which is microlocally asymmetric with respect to the geodesic flow, and $\{\phi_j\}_{j \in \mathbb{N}}$ be eigenfunctions of the Laplace-Beltrami operator.  Then there exists a density-one subset S of $\mathbb{N}$ such that for $a \in S^{0,0}(T^*\gamma \times [0, h_0))$,
\begin{equation*}
\lim_{j \rightarrow \infty \; j \in S} \langle Op_{h_j}(a) \phi_{j},  \phi_{j} \rangle_{L^2(\gamma)} = \omega(a)
\end{equation*}
where 
\begin{equation*}
\omega(a) = \frac{4}{vol(S^*\gamma} \int_{B^* \gamma} a_0(s, \sigma)(1-|\sigma|^2)^{-1/2} \, ds d\sigma.
\end{equation*}
\end{theo}
\noindent For the definition of microlocal asymmetry, please see \cite{TZ}.

Now, let us assume $\gamma$ satisfies the microlocal asymmetry assumption with $\{ \phi_{j}\}_{j=1}^{\infty}$ being a quantum ergodic sequence of eigenfunctions on $M$.  Take $\epsilon > 0$.  Notice that $\int_{\gamma} \phi_{j}  d \sigma = \langle 1, \phi_{j} \rangle_{L^2(\gamma)}$.  Set $(s,\tau)$ to be the coordinates on $B^* \gamma$.  Since $WF_h(1) \subset \gamma \times \{0\}$, we have that for a smooth cutoff $\chi_{\epsilon}$ where $\chi_{\epsilon}(\tau) = 1$ on $(-\epsilon, \epsilon)$ and $\chi_{\epsilon}(\tau) = 0$ outside $(-(3/2)\epsilon, (3/2)\epsilon)$,
\begin{equation} \label{qerint}
\langle 1, \phi_{j} \rangle_{L^2(\gamma)} = \langle Op_{h_j}(\chi_{\epsilon})1, \phi_{j} \rangle_{L^2(\gamma)} + \mathcal{O}(h^{\infty}),
\end{equation}
by semiclassical wavefront set calculus (see \cite{Zw} for more details) for the corresponding $h_j$-pseudodifferential cutoff $Op_{h_j}(\chi_{\epsilon})$.  Applying the QER result to the quantity $\langle 1, Op_{h_j}(\chi_{\epsilon})^*\phi_{j} \rangle_{L^2(\gamma)}$ after using the Cauchy-Schwarz inequality shows (\ref{qerint}) is $\mathcal{O}(\epsilon)$ for all $h_j \leq h_0(\epsilon)$, which proves the estimate.

Heurisitically speaking, if we were to express $\phi_j$ as an ``ergodic" one-dimensional Fourier series on the curve, then the above calculation suggests that the mass of the ``period" arises from the constant mode of the series.  It is interesting to ask how one can formalize this notion and apply it to periods of odd powers of ergodic eigenfunctions.  A similar idea is used in \cite{Rez}.

As we can see, using quantum ergodicity assumptions along with asymmetry allows us to retrieve the $o(1)$ bound (under negative curvature assumptions) of Chen-Sogge \cite{CS}  without the use of any trace formula.  From this point of view, our probabilistic results seem to be consistent with the ``Random wave model" of Berry.
\end{rem}

\begin{rem} As mentioned above, the analogous question for $|\int_{S} u^{2l+1} d \sigma|$ for $l \in \Z^+$ is still of interest but is more subtle.  It is not entirely clear that the same methods using Weyl-type formulas will work.  Furthermore, it is not known if we have analogues of (\ref{prevbound}) in the case of all odd powers.  We hope to address these questions in future work.

\end{rem}

\end{document}